\numberwithin{equation}{section}
\theoremstyle{plain}
\newtheorem{thm}{Theorem}[section]
\newtheorem{lem}[thm]{Lemma}
\newtheorem{prop}[thm]{Proposition}
\newtheorem{cor}[thm]{Corollary}
\newtheorem*{thm*}{Theorem}
\newtheorem*{lem*}{Lemma}
\newtheorem*{prop*}{Proposition}
\newtheorem*{cor*}{Corollary}
\theoremstyle{definition}
\newtheorem{defn}[thm]{Definition}
\newtheorem*{defn*}{Definition}
\newtheorem{ex}[thm]{Example}
{}
\newtheorem{rem}[thm]{Remark}
\newtheorem*{rem*}{Remark}
\newtheorem{qn}[thm]{Question}{}
\theoremstyle{remark}
{}
{}
{}
\def\to{\longrightarrow} 
\def\AA{\mathbb{A}}
\def\CC{\mathbb{C}}
\def\EE{\mathbb{E}}
\def\ZZ{\mathbb{Z}}
\def\sfC{\mathsf{C}}
\def\sfD{\mathsf{D}}
\def\sfH{\mathsf{H}}
\def\sfK{\mathsf{K}}
\def\sfT{\mathsf{T}}
\def\mcE{\mathcal{E}}
\def\sg{\mathrm{sg}}
\DeclareMathOperator{\Hom}{Hom}
\DeclareMathOperator{\modu}{\mathsf{mod}}
\DeclareMathOperator{\gr}{\mathsf{gr}}
\DeclareMathOperator{\Perf}{\mathsf{Perf}}
\DeclareMathOperator{\RHom}{\mathbf{R}Hom}
\DeclareMathOperator{\cone}{cone}
\DeclareMathOperator{\thick}{thick}
\DeclareMathOperator{\rad}{rad}
\def\dgcatk{\mathsf{dgcat}_k}
\definecolor{internationalkleinblue}{rgb}{0.0, 0.18, 0.65}
\title{Homotopy Invariants for gradable finite dimensional algebras}
\author{Sira Gratz}
\thanks{The first author was supported by a research grant (VIL42076) from VILLUM FONDEN and by EPSRC (Grant Number EP/V038672/1). All the authors are grateful to the Hausdorff Research Institute for Mathematics where this work began during the Junior Trimester Program: New Trends in Representation Theory.}
\address{Sira Gratz, Aarhus University, Department of Mathematics, Ny Munkegade 118, bldg. 1530
DK-8000 Aarhus C, Denmark
}
\email{sira@math.au.dk}
\urladdr{https://sites.google.com/view/siragratz}
\author{Theo Raedschelders}
\address{Theo Raedschelders, The Island, Natiestraat 2, B-2060 Antwerpen, Belgium}
\author{\v{S}pela \v{S}penko}
\address{\v{S}pela \v{S}penko, D\'epartement de Math\'ematique, Universit\'e Libre de Bruxelles, Campus de la Plaine CP 213, Bld du Triomphe, B-1050 Bruxelles, Belgium}
\email{spela.spenko@ulb.be}
\author{Greg Stevenson}
\address{Greg Stevenson, Aarhus University, Department of Mathematics, Ny Munkegade 118, bldg. 1530
DK-8000 Aarhus C, Denmark
}
\email{greg@math.au.dk}
\urladdr{https://sites.google.com/view/gregstevenson}
\keywords{}
\begin{document}

\begin{abstract}
We show that for a gradable finite dimensional algebra the perfect complexes and bounded derived category cannot be distinguished by homotopy invariants.
\end{abstract}

\maketitle

\setcounter{tocdepth}{1}
\tableofcontents



\section{Introduction}

Given a finite dimensional algebra $\Lambda$ one can associate to it the (pre)triangulated (dg) categories $\Perf \Lambda$ and $\sfD^{\mathrm{b}}(\modu \Lambda)$. The former is the completion of $\Lambda$ under finite homotopy colimits and remains finite dimensional in an appropriate sense. The bounded derived category is, in principle, larger and coincides with $\Perf \Lambda$ precisely when $\Lambda$ has finite global dimension. In fact, $\sfD^{\mathrm{b}}(\modu \Lambda)$ is dual to $\Perf \Lambda$ and although it may not be finite dimensional it has the complementary finiteness property of being smooth i.e.\ of finite global dimension.

Let us assume that $\Lambda$ has infinite global dimension. A natural way to study these distinct categories is through their invariants, such as $K$-theory, Hochschild (co)homology, and so on. On one hand, it would be extremely interesting if differences between the invariants of $\Perf \Lambda$ and $\sfD^{\mathrm{b}}(\modu \Lambda)$ shed light on how singular $\Lambda$ was, and on the other hand we might expect some relationship between their invariants coming from the duality relating these categories.

We know exactly what to expect from $\Perf \Lambda$. Invariants of dg categories are insensitive to the process of adding finite homotopy colimits and so $\Perf \Lambda$ is indistinguishable from $\Lambda$. The bounded derived category is more mysterious, but we know how to compute some invariants for $\sfD^{\mathrm{b}}(\modu \Lambda)$. For instance, it is well known that $K_0$ of both categories is free of the same finite rank, and by a result of Lowen and Van den Bergh \cite{LVdBHH}*{Theorem~4.4.1} the Hochschild cohomology of $\sfD^{\mathrm{b}}(\modu \Lambda)$ and $\Perf \Lambda$ agree (cf.\ \cite{GoodbodyHH} for a recent persepctive in terms of duality). We also know that some invariants, for example Hochschild homology, distinguish them.

In this article we prove that the $\AA^1$-homotopy invariants of $\Perf \Lambda$ and $\sfD^{\mathrm{b}}(\modu \Lambda)$ coincide provided $\Lambda$ admits a non-negative grading such that $\Lambda_{\geq 1}$ is the Jacobson radical. The identification is not induced by the natural inclusion, but rather by (more or less) showing both categories are $\AA^1$-motivically just $\Lambda/\rad(\Lambda)$. As an application we are able to compute $\AA^1$-homotopy invariants of the singularity category of such a $\Lambda$, generalizing the results of \cite{MR4103346} for self-injective algebras. We conclude the article by discussing what we know in the absence of such a grading.



\section{Conventions and preliminaries}

Throughout, we work over a fixed base field $k$ which we assume is algebraically closed. We work with right modules and right dg modules. We briefly recall the main definitions we will need. Our aim is simply to situate the reader and fix notation.


\subsection{Graded algebras}

We fix $\Lambda$ to be a finite dimensional $k$-algebra admitting a semi-simple grading, that is, a non-negative $\ZZ$-grading with semi-simple degree zero part. So, we have
\[
\Lambda = \bigoplus_{i=0}^n \Lambda_i \quad \text{ and } \quad \Lambda_0 = \Lambda/\rad(\Lambda) = S
\]
where $\rad(\Lambda)$ denotes the Jacobson radical of the ungraded algebra $\Lambda$ (we see immediately the radical is gradable and agrees with the graded radical of the graded algebra) and $n$ is some natural number.

We denote by $\gr \Lambda$ the category of finite dimensional graded $\Lambda$-modules and degree $0$ morphisms. This category has a natural action of $\ZZ$ by automorphisms: for $i\in \ZZ$ and a graded module $N$ we define $N(i)$ to be the graded module with $N(i)_j = N_{i+j}$, i.e.\ we just reindex the grading and the action on morphisms is also given by reindexing.



\subsection{Invariants}

Throughout we work with dg categories over $k$. For a finite dimensional algebra $\Lambda$ we denote by $\Perf \Lambda$ the dg category of bounded complexes of finitely generated projective modules and by $\sfD^{\mathrm{b}}(\modu \Lambda)$ the dg category of complexes of finitely generated projectives with finite dimensional total cohomology. These are enhancements of the derived category of perfect complexes and bounded derived category respectively.

We now recall the definition of an $\AA^1$-homotopy invariant. Denote by $\dgcatk$ the category of (essentially) small dg categories over $k$, i.e.\ this is the category with objects the small dg categories and morphisms given by isomorphism classes of dg functors. In addition we fix some triangulated category $\sfT$.

A localization sequence of dg categories is the inclusion of a thick subcategory followed by the corresponding Verdier quotient (up to Morita equivalence). Some further details and equivalent formulations can be found in \cite{KellerDG}*{Theorem~4.11}.

\begin{defn}
A functor $\EE\colon \dgcatk \to \sfT$ is a \emph{localizing invariant} if
\begin{itemize}
\item[(1)] $\EE$ sends derived Morita equivalences to isomorphisms, in particular for any dg category $\sfC$ the canonical inclusion $\sfC\to \Perf(\sfC)$ is sent to an isomorphism by $\EE$;
\item[(2)] $\EE$ sends localization sequences of dg categories to triangles.
\end{itemize}
A localizing invariant $\EE$ is $\AA^1$\emph{-homotopy invariant} if moreover
\begin{itemize}
\item[(3)] $\EE$ inverts the canonical inclusion
\begin{displaymath}
\sfC \to \sfC[t] = \sfC\otimes_k k[t]
\end{displaymath}
for every dg category $\sfC$, where $k[t]$ is concentrated in degree $0$.
\end{itemize}
\end{defn}

\begin{rem}
It would be very natural to ask that $\EE$ actually takes values in a presentable stable $\infty$-category and moreover preserves filtered homotopy colimits. This occurs in examples, but we will not require this strengthening for our arguments.
\end{rem}







\section{Homotopy invariants of bounded and perfect complexes}

In this section we let $\Lambda$ denote a basic finite dimensional algebra over an algebraically closed field $k$. Moreover, we assume that $\Lambda$ admits a semi-simple grading. Let $S = \Lambda_0 = \Lambda/ \rad(\Lambda)$ denote the top of $\Lambda$, with $n = \dim \Lambda_0$ simple summands.

We begin with a preparatory lemma.

\begin{lem}
Let $M$ and $N$ be gradable finite dimensional $\Lambda$-modules. Fix gradings on $M$ and $N$ such that $M$ is concentrated in non-negative degrees and $N$ is concentrated in non-positive degrees. Then
\[
\RHom_\Lambda(M,N) = \bigoplus_{j\leq 0}\RHom_{\gr \Lambda}(M,N(j)).
\]
\end{lem}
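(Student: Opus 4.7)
The plan is to choose a single graded projective resolution of $M$ and use it to compute both sides, so that the statement reduces to the standard decomposition of ungraded Hom over a graded algebra into a direct sum of graded Homs indexed by the grading shift.

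First I would construct a resolution $P^\bullet \to M$ in $\gr\Lambda$ in which every $P^i$ is finitely generated and concentrated in non-negative degrees. This is possible because $\Lambda$ is non-negatively graded and graded Noetherian: $M$ admits homogeneous generators in degrees $\geq 0$, so one can take $P^0 = \bigoplus_l \Lambda(-a_l)$ with $a_l \geq 0$ surjecting onto $M$; the kernel is a finitely generated graded submodule of $P^0$, again concentrated in non-negative degrees, and we iterate. Forgetting the grading, the same $P^\bullet$ remains a resolution of $M$ by finitely generated projectives in $\modu \Lambda$, so both $\RHom_\Lambda(M,N)$ and $\RHom_{\gr \Lambda}(M,N(j))$ are represented term-wise by this single complex.

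The key formal input is the identification
\[
\Hom_\Lambda(P^i, N) = \bigoplus_{j \in \ZZ} \Hom_{\gr \Lambda}(P^i, N(j)),
\]
valid because $P^i$ is finitely generated and every $\Lambda$-linear map is determined by the images of finitely many homogeneous generators, each of which decomposes as a finite sum of homogeneous components in $N$. Under our degree hypotheses, a degree-$0$ graded map $P^i \to N(j)$ sends a generator of degree $d \geq 0$ into $N(j)_d = N_{d+j}$, which vanishes whenever $j > 0$ since $N$ is concentrated in non-positive degrees. Thus the sum is supported on $j \leq 0$, and the identification holds as an equality of complexes in $i$ once one notes that the decomposition is compatible with the differential of $\Hom_\Lambda(P^\bullet, N)$.

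Finally, since direct sums are exact, cohomology commutes with the direct sum over the fixed index set $\ZZ_{\leq 0}$, which lifts the equality from the complex level to $\RHom$ and yields the claimed formula. The only real obstacle is arranging the first step — controlling the generation degrees of the $P^i$ — because without non-negativity of those generators the vanishing used to restrict $j$ to $\leq 0$ fails; beyond that the argument is essentially bookkeeping with no homotopical subtleties.
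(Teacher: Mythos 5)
Your proof is correct and takes essentially the same route as the paper: choose a graded projective resolution of $M$ concentrated in non-negative degrees, use the standard decomposition $\Hom_\Lambda(P^i,N)=\bigoplus_j \Hom_{\gr\Lambda}(P^i,N(j))$, and observe that the non-negativity of generator degrees together with $N$ being supported in non-positive degrees forces the summands with $j>0$ to vanish. The only cosmetic difference is that the paper splits the argument into two steps (first the decomposition via a presentation and passage to derived functors, then separately the degree bound via a minimal resolution citing a reference), whereas you run a single resolution through both halves and explicitly flag the exactness of direct sums; both are fine.
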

\begin{proof}
We observe that $\Hom_{\gr \Lambda}(\Lambda, N(j)) \cong N_j$ and so $\oplus_{j\in \ZZ}\Hom_{\gr \Lambda}(\Lambda, N(j)) \cong N$. Hence, using a presentation for $M$, it follows that $\Hom_\Lambda(M,N) = \oplus_{j\in \ZZ}\Hom_{\gr \Lambda}(M, N(j))$. Taking right derived functors yields the desired formula, except for the degree bound on the sum.

Let $P^\bullet$ be a minimal graded projective resolution of $M$. If we forget the grading this gives a minimal projective resolution of $M$ sans grading. As $M$ is generated in non-negative degrees and $\Lambda$ is non-negatively graded, we have that each $P^i$ is also concentrated in non-negative degrees (cf.\ \cite{BurkeStevenson}*{Lemma~3.10}). Using this resolution to compute $\RHom_{\gr \Lambda}(M,N(j))$ we see that it is acyclic unless $j\leq 0$.
\end{proof}

With this in hand we proceed to the main theorem.

\begin{thm}\label{T:EDb=EPerf}
	For any $\AA^1$-homotopy invariant $\EE$ we have
	\[
		\EE(\sfD^{\mathrm{b}}(\modu \Lambda)) \cong \EE(k)^{\oplus n} \cong \EE(\Perf(\Lambda)).
	\]
\end{thm}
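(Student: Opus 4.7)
The strategy is to reduce both sides in parallel to an $\AA^1$-deformation question for a non-negatively (internally) graded (dg) algebra whose degree-zero part is $S$. By Morita invariance (axiom (1)) one has $\EE(\Perf \Lambda) = \EE(\Lambda)$, and here the given semi-simple grading on $\Lambda$ is already such a structure. For the bounded side I first reduce $\sfD^{\mathrm{b}}(\modu \Lambda)$ to a dg algebra: every finite dimensional $\Lambda$-module has a composition series whose factors are simple summands of $S$, so $S$ classically generates $\sfD^{\mathrm{b}}(\modu \Lambda)$ as a thick subcategory. By Keller's version of derived Morita equivalence this gives $\EE(\sfD^{\mathrm{b}}(\modu \Lambda)) = \EE(A)$, where $A$ is a dg model for $\RHom_\Lambda(S,S)$, concretely the endomorphism dg algebra of a minimal graded projective resolution $P^\bullet \to S$.

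The preparatory lemma, applied with $M = N = S$, equips $A$ with an internal non-negative $\ZZ$-grading given by $A_m := \RHom_{\gr \Lambda}(S, S(-m))$ for $m \geq 0$. Composition of graded morphisms preserves internal degree, and since $P^\bullet$ was chosen graded so does the differential, so this is a strict decomposition at the dg-algebra level. Moreover, because each $P^i$ for $i \geq 1$ is generated in strictly positive internal degrees while $S$ is concentrated in degree $0$, one has $\Hom_{\gr \Lambda}(P^i, S) = 0$ for $i \geq 1$ and $\Hom_{\gr \Lambda}(P^0, S) = S$, whence $A_0 \cong S$ as dg algebras (concentrated in cohomological degree $0$).

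The uniform tool is a Rees-type $\AA^1$-contraction. For any non-negatively (internally) graded dg algebra $B = \bigoplus_{m \geq 0} B_m$, with canonical inclusion $j: B_0 \hookrightarrow B$ and projection $s: B \twoheadrightarrow B_0$, define a dg algebra map $\phi: B \to B[t]$ by $\phi(b) = b t^m$ for $b \in B_m$; this is well defined precisely because $B$ is non-negatively graded, and compatible with the differential because the grading is. Writing $\pi_0, \pi_1: B[t] \to B$ for evaluation at $t = 0$ and $t = 1$, one has $\pi_1 \phi = \mathrm{id}_B$ and $\pi_0 \phi = j \circ s$. Both $\pi_0, \pi_1$ are retractions of the canonical inclusion $B \hookrightarrow B[t]$, which is inverted by $\EE$ by axiom (3), so $\EE(\pi_0) = \EE(\pi_1)$. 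Comparing compositions yields $\EE(j) \circ \EE(s) = \mathrm{id}_{\EE(B)}$, and combined with the tautology $s \circ j = \mathrm{id}_{B_0}$ this shows $\EE(s)$ and $\EE(j)$ are mutually inverse, so $\EE(B) \cong \EE(B_0)$.

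Applying this to $B = \Lambda$ and $B = A$ gives $\EE(\Lambda) \cong \EE(S) \cong \EE(A)$. Finally, since $k$ is algebraically closed and $\Lambda$ is basic, $S$ is a product of $n$ copies of $k$, so $\Perf(S)$ splits as a product of $n$ copies of $\Perf(k)$ and additivity of $\EE$ (a standard consequence of axiom (2)) gives $\EE(S) \cong \EE(k)^{\oplus n}$. The main obstacle I expect is verifying that the lemma's decomposition genuinely upgrades to a strict decomposition of a dg algebra model of $\RHom_\Lambda(S,S)$ into a non-negatively internally graded dg algebra with degree-zero part $S$; once this is in place, the single Rees construction closes out both reductions at once.
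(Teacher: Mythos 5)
Your proposal is correct and follows essentially the same route as the paper: both use the preparatory lemma to endow $\RHom_\Lambda(S,S)$ with a non-negative internal grading whose degree-zero part is $S$, and then contract a non-negatively graded dg algebra onto its degree-zero piece, the Rees-type $\AA^1$-contraction you write out being precisely the content of the Tabuada--Van den Bergh lemma the paper cites at that step. The only cosmetic difference is that you read off $A_0 \cong S$ directly from the minimal graded resolution, whereas the paper argues that $\RHom_{\gr\Lambda}(S,S)$ is formal with cohomology $\Hom_{\gr\Lambda}(S,S)$.
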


\begin{proof}
	The semi-simple grading on $\Lambda$ induces a decomposition of $\mcE = \RHom_\Lambda(S,S)$ as
	\[
		\mcE \cong \bigoplus_{j \in \ZZ} \RHom_{\gr \Lambda}(S,S(j)) \cong \bigoplus_{j \leq 0} \RHom_{\gr \Lambda}(S,S(j)),
	\]
	by the previous lemma. We note that this decomposition is compatible with the dg algebra structure on $\mcE$: the first map is a dg algebra quasi-isomorphism and the rightmost term is a subalgebra of the middle one. The degree $0$ part of $\mcE$ with respect to this internal grading is 
	\[
	 \mcE_0 \cong \RHom_{\gr \Lambda} (S,S).
	 \]
Let us now consider $\mcE_0$ with its natural cohomological grading. It is a differential graded algebra with cohomology concentrated in degree $0$ and hence formal i.e.\ quasi-isomorphic to its degree $0$ cohomology $\Hom_{\gr \Lambda} (S,S)$. 
	Using \cite{TVdB}*{Lemma~6.6} for the first isomorphism and the fact that $\EE$ preserves quasi-isomorphisms for the second and third, we obtain
	\[
		\EE(\mcE) \cong \EE(\mcE_0)  \cong \EE(\RHom_{\gr \Lambda}(S,S)) \cong \EE(\Hom_{\gr \Lambda}(S,S)) \cong \EE(k)^{\oplus n}.
	\]
	On the other hand, employing the fact that $\EE$ preserves derived Morita equivalences for the first isomorphism, and \cite{TVdB}*{Lemma~6.6} for the second, yields
	\[
		\EE(\Perf \Lambda) \cong \EE(\Lambda) \cong \EE(\Lambda_0) \cong \EE(k)^{\oplus n}.
	\]
We are now essentially done: since every finite dimensional $\Lambda$-module has a finite composition series we have $\sfD^{\mathrm{b}}(\modu \Lambda) \cong \thick(S) \cong \Perf \mcE$. Given the above computations we obtain
\[
	\EE(\sfD^\mathrm{b}(\modu \Lambda)) \cong \EE(\Perf( \mcE)) \cong\EE( \mcE) \cong \EE(k)^{\oplus n}
\]
which completes the proof.
\end{proof}

A number of comments are in order. First, let us remark that the isomorphism of the theorem is not, in general, induced by the inclusion $\iota\colon \Perf \Lambda \to \sfD^\mathrm{b}(\modu \Lambda)$. The invariants of the singularity category
\[
\sfD_{\sg}(\Lambda) = \sfD^\mathrm{b}(\modu \Lambda) / \Perf \Lambda
\]
measure exactly this phenomenon.

Denote by $C_\Lambda$ the Cartan matrix of $\Lambda$, i.e.\ the matrix encoding the multiplicity of the simple modules in the indecomposable projectives. For us the relevant definition is that $C_\Lambda$ is the matrix describing the morphism
\[
\begin{tikzcd}
\ZZ^n \arrow[r, "\sim"] & \sfK_0(\Perf \Lambda) \arrow[r, "\sfK_0(\iota)"] & \sfK_0(\sfD^\mathrm{b}(\modu \Lambda)) \arrow[r, "\sim"] & \ZZ^n
\end{tikzcd}
\]
where the identifications with $\ZZ^n$ are given by taking the basis consisting of the classes of the indecomposable projective and simple modules respectively.

\begin{cor}
	Let $\EE$ be an $\AA^1$-homotopy invariant. We have
	\[
		\EE\left(\sfD_{\sg}(\Lambda)\right) \cong \cone \left(\xymatrix{\EE(k)^{\oplus n} \ar[r]^-{C_\Lambda} & \EE(k)^{\oplus n}}\right)
	\]
	
\end{cor}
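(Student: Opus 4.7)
The plan is to apply the localizing invariant $\EE$ to the defining localization sequence of the singularity category, invoke Theorem~\ref{T:EDb=EPerf} to identify two of the three resulting terms, and then match the remaining map to the Cartan matrix.

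First, applying $\EE$ to
\[
\Perf \Lambda \xrightarrow{\iota} \sfD^\mathrm{b}(\modu \Lambda) \to \sfD_{\sg}(\Lambda)
\]
produces, by axiom (2) of a localizing invariant, a distinguished triangle $\EE(\Perf\Lambda)\xrightarrow{\EE(\iota)}\EE(\sfD^\mathrm{b}(\modu\Lambda))\to\EE(\sfD_{\sg}(\Lambda))$ in $\sfT$. Rewriting the first two terms using Theorem~\ref{T:EDb=EPerf} immediately exhibits $\EE(\sfD_{\sg}(\Lambda))$ as the cone of an induced map $f\colon\EE(k)^{\oplus n}\to\EE(k)^{\oplus n}$, and everything reduces to showing $f=C_\Lambda$.

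To identify $f$, I would unpack the chains of isomorphisms from the proof of Theorem~\ref{T:EDb=EPerf}. On the source side, $\EE(\Perf\Lambda)\cong\EE(\Lambda)\cong\EE(\Lambda_0)=\EE(k)^{\oplus n}$ decomposes according to the primitive idempotents of $\Lambda_0$, the $i$-th summand corresponding (after lifting idempotents) to the indecomposable projective $P_i$. On the target side, the chain $\EE(\sfD^\mathrm{b}(\modu\Lambda))\cong\EE(\mcE)\cong\EE(\mcE_0)=\EE(k)^{\oplus n}$ decomposes according to the simple summands $S_j$ of $S$, since $\mcE_0\cong\End_{\gr\Lambda}(S)$. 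A composition series of $P_i$ in $\modu\Lambda$ then exhibits $\iota(P_i)$ as an iterated extension of simples with $S_j$ appearing $c_{ji}=\dim_k\Hom_\Lambda(P_j,P_i)$ times; since $\EE$ sends each such short exact sequence of $\Lambda$-modules to a distinguished triangle in $\sfT$, summing in the semi-additive structure of $\sfT$ shows that the $i$-th column of $f$ is $(c_{1i},\ldots,c_{ni})$, i.e.\ $f=C_\Lambda$ acting coordinatewise by the integer action on $\EE(k)$.

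The main obstacle is this last identification. At the level of $K_0$ it is tautological, because $C_\Lambda$ was \emph{defined} as the matrix of $K_0(\iota)$, but promoting the statement to the level of $\sfT$ demands a careful bookkeeping argument: one must check that the Morita and $\AA^1$-invariance isomorphisms of Theorem~\ref{T:EDb=EPerf} intertwine correctly with the composition-series triangles, so that no signs, permutations, or higher extension data obstruct the identification of $f$ with the \emph{integer} matrix $C_\Lambda$ rather than some map lying in a larger subgroup of $\Hom_\sfT(\EE(k)^{\oplus n},\EE(k)^{\oplus n})$.
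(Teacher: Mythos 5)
Your first two steps coincide with the paper: apply axiom~(2) of a localizing invariant to the localization sequence $\Perf\Lambda \to \sfD^{\mathrm{b}}(\modu\Lambda)\to \sfD_{\sg}(\Lambda)$ and rewrite the outer two terms via Theorem~\ref{T:EDb=EPerf}. You then correctly reduce the corollary to identifying $\EE(\iota)\colon \EE(k)^{\oplus n}\to \EE(k)^{\oplus n}$ with the Cartan matrix, and you rightly flag this identification as the crux. However, your proposed way of settling it does not work, and this is exactly where the paper supplies a technical tool you are missing.

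Concretely, the step where you write that ``$\EE$ sends each such short exact sequence of $\Lambda$-modules to a distinguished triangle in $\sfT$'' is not an axiom available to you: $\EE$ is defined on $\dgcatk$ and sends \emph{localization sequences of dg categories} to triangles, not short exact sequences of modules inside a single category. A composition series of $P_i$ does give a relation in $\sfK_0$, but there is no functor producing triangles in $\sfT$ out of such filtrations, so the ``sum in the semi-additive structure'' step is unjustified. This is precisely the coherence problem you yourself gesture at in your final paragraph: nothing in the axioms says that $\Hom_{\sfT}(\EE(k)^{\oplus n},\EE(k)^{\oplus n})$ contains only integer matrices, nor that the class of $\EE(\iota)$ there is controlled by $K_0$-level data. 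The paper closes this gap by invoking \cite{MR3342391}*{Proposition~2.8}, which shows that for maps between sums of copies of the motive of $k$ the identification \emph{can} be tested after applying homotopy $K$-theory, reducing the question to computing $\sfK\sfH_0(\iota)=\sfK_0(\iota)=C_\Lambda$. Without that result (or a substitute for it), your argument does not go through; with it, the bookkeeping you were attempting becomes unnecessary.
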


\begin{proof}
By Theorem \ref{T:EDb=EPerf} applying $\EE$ to the localization sequence 
\[
	\xymatrix{\Perf \Lambda \ar[r]^-\iota & \sfD^{\mathrm{b}} (\Lambda)\ar[r] & \sfD_{\sg}(\Lambda)}
\] 
yields the triangle
\[
	\xymatrix{\EE(k)^{\oplus n} \ar[r]^-{\EE(\iota)} & \EE(k)^{\oplus n}\ar[r] & \EE(\sfD_{\sg}(\Lambda))}
\] 
and so the statement comes down to identifying $\EE(\iota)$. This can be done by applying \cite{MR3342391}*{Proposition~2.8}, which reduces the identification to computing $\sfK\sfH_0(\iota)$ where $\sfK\sfH$ is homotopy $K$-theory. For both the perfect and bounded complexes we have agreement of $\sfK\sfH_0$ and $\sfK_0$ and so this map is the Cartan matrix essentially by definition. 

\end{proof}

\begin{rem}
This gives a generalization, with a new proof, of the computation \cite{MR4103346}*{Theorem~3.4.2} for the self-injective case.
\end{rem}

\begin{ex}
We obtain new examples of dg categories whose $\AA^1$-homotopy invariants are uniformly trivial. For instance, the algebras described in \cite{MR801315} Examples~8 and 9 have infinite global dimension but Cartan matrices of determinant $1$ and $-1$ respectively. It follows that their singularity categories are non-trivial but have trivial $\AA^1$-motives.
\end{ex}

It is natural to ask if the conclusion of Theorem~\ref{T:EDb=EPerf} is valid for \emph{any} finite dimensional algebra.

\begin{qn}
Let $\EE$ be an $\AA^1$-homotopy invariant and $\Gamma$ a finite dimensional algebra. Do we have
\[
\EE(\sfD^{\mathrm{b}}(\modu \Gamma)) \cong \EE(k)^{\oplus n} \cong \EE(\Perf(\Gamma))?
\]
If so, is there a structural proof of this fact that illuminates why?
\end{qn}

This question has content: it is not the case that any finite dimensional algebra satisfies the hypotheses of Theorem~\ref{T:EDb=EPerf}. For instance, \cite{ungradable} gives examples of finite dimensional algebras (of finite and of infinite global dimension) with no semi-simple grading. 

There is some evidence the answer might be yes. We do know the conclusion of the theorem is true, without restriction, for homotopy $K$-theory.

\begin{prop}
Let $\Gamma$ be any basic finite dimensional algebra. Then we have
\[
	\sfK\sfH(\sfD^{\mathrm{b}}(\modu \Gamma)) \cong \sfK\sfH(k)^{\oplus n} \cong \sfK\sfH(\Perf(\Gamma))
\]
where $n$ is the number of simple modules.
\end{prop}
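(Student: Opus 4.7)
The plan is to mimic the proof of Theorem~\ref{T:EDb=EPerf}, replacing the use of the grading with two classical invariance properties specific to $\sfK\sfH$: Weibel's nilpotent invariance for ordinary rings, and the truncating property that the augmentation $A \to H^0(A)$ of any cohomologically connective dg algebra induces an equivalence on $\sfK\sfH$.

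For $\Perf(\Gamma)$ the argument should be immediate: Morita invariance gives $\sfK\sfH(\Perf \Gamma) \cong \sfK\sfH(\Gamma)$, Weibel's theorem applied to the nilpotent ideal $\rad(\Gamma)$ yields $\sfK\sfH(\Gamma) \cong \sfK\sfH(\Gamma/\rad(\Gamma))$, and since $\Gamma$ is basic over the algebraically closed field $k$, the quotient $\Gamma/\rad(\Gamma) \cong k^{\oplus n}$.

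For the bounded derived category I would proceed exactly as in the proof of Theorem~\ref{T:EDb=EPerf}: set $S = \Gamma/\rad(\Gamma)$ and $\mcE = \RHom_\Gamma(S,S)$, then use that every finite dimensional $\Gamma$-module has a finite composition series to identify $\sfD^{\mathrm{b}}(\modu \Gamma) \cong \thick(S) \cong \Perf(\mcE)$. Morita invariance reduces the problem to computing $\sfK\sfH(\mcE)$. Now $\mcE$ is cohomologically non-negatively graded with $H^0(\mcE) \cong S^{\op}$, so the truncating property should give $\sfK\sfH(\mcE) \cong \sfK\sfH(S^{\op}) \cong \sfK\sfH(k)^{\oplus n}$, precisely replacing the role played by the internal-grading trick and the formality of $\mcE_0$ in Theorem~\ref{T:EDb=EPerf}.

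The main obstacle is justifying the truncating invariance of $\sfK\sfH$ applied to the dg algebra $\mcE$, whose cohomology $\Ext^*_\Gamma(S,S)$ can be arbitrarily complicated. The cleanest input is the theorem of Land--Tamme that $\sfK\sfH$ is a truncating invariant on connective $\mathbb{E}_1$-rings; failing a clean appeal to that, one could run the Postnikov tower of $\mcE$ by hand, at each stage identifying the successive quotients as derived square-zero extensions and invoking the appropriate square-zero nilpotent invariance of $\sfK\sfH$, then passing to the limit. Either way, no extra hypothesis on $\Gamma$ is needed.
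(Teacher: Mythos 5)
Your argument for $\Perf(\Gamma)$ matches the paper exactly and is correct. The argument for $\sfD^{\mathrm{b}}(\modu \Gamma)$, however, has a fatal gap: the dg algebra $\mcE = \RHom_\Gamma(S,S)$ is \emph{coconnective}, not connective. Its cohomology $\Ext^*_\Gamma(S,S)$ lives in non-negative \emph{cohomological} degrees, which in the homological convention used in the definition of a truncating invariant means its homotopy is concentrated in degrees $\leq 0$. The truncating property of $\sfK\sfH$ (Land--Tamme) asserts $\sfK\sfH(A) \simeq \sfK\sfH(H^0(A))$ only for \emph{connective} $A$, and $\mcE$ falls on precisely the wrong side of this. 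Your proposed fallback---running the Postnikov tower and peeling off square-zero extensions---fails for the same reason: the Postnikov tower of a coconnective ring spectrum is not assembled from connective square-zero extensions, so nilinvariance gives you no purchase. Indeed, if your argument worked it would answer, for any truncating invariant, the Question the paper explicitly poses as open; the remark at the end of the section (``Thus for such invariants everything boils down to computing $\EE(\sfD^{\mathrm{b}}(\modu \Gamma))$'') is precisely flagging that the truncating property handles only the $\Perf$ side.

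The paper's actual route for the bounded side is quite different and is worth contrasting. Rather than reducing to the Ext-algebra $\mcE$, it invokes a result of Antieau (cited as \cite{antieau2022}*{Corollary~A.2}) which provides an equivalence $\sfK(\sfD^{\mathrm{b}}(\modu \Gamma)) \xrightarrow{\sim} \sfK\sfH(\sfD^{\mathrm{b}}(\modu \Gamma))$ in this regularity situation, reducing the problem to non-homotopy $K$-theory. Then the theorem of the heart and d\'evissage compute $\sfK(\sfD^{\mathrm{b}}(\modu \Gamma)) \cong \sfK(\modu \Gamma) \cong \sfK(k)^{\oplus n}$. None of this uses the Ext-algebra or any form of connective truncation invariance. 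To salvage your approach you would need a statement of the form ``$\sfK\sfH$ of a coconnective dg algebra agrees with $\sfK\sfH$ of its $H^0$,'' which is not true in general and is not what Land--Tamme provides.
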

\begin{proof}
Homotopy $K$-theory is nilinvariant for discrete rings by \cite{weibel2013k}*{Corollary~IV.12.5} and so
\[
\sfK\sfH(\Perf \Gamma) \cong \sfK\sfH(\Gamma) \cong \sfK\sfH(\Gamma/\rad(\Gamma)) \cong \sfK\sfH(k)^{\oplus n}.
\]
On the other hand, for $\sfD^{\mathrm{b}}(\modu \Gamma)$ (really its ind-completion) we are in the situation of \cite{antieau2022}*{Corollary~A.2} and so we get an isomorphism $\sfK(\sfD^{\mathrm{b}}(\modu \Gamma)) \stackrel{\sim}{\to} \sfK\sfH(\sfD^{\mathrm{b}}(\modu \Gamma))$. The theorem of the heart and d\'evissage then tell us that
\[
\sfK\sfH(\sfD^{\mathrm{b}}(\modu \Gamma)) \cong \sfK(\sfD^{\mathrm{b}}(\modu \Gamma)) \cong \sfK(k)^{\oplus n} \cong \sfK\sfH(k)^{\oplus n}.
\]
\end{proof}

We also know the statement for periodic cyclic homology $\mathsf{HP}$ for commutative algebras over $\CC$.

\begin{prop}
Let $R$ be a finite dimensional commutative local $\CC$-algebra. Then
\[
\mathsf{HP}(\sfD^{\mathrm{b}}(\modu R)) \cong \mathsf{HP}(\CC) \cong \mathsf{HP}(\Perf R).
\]
\end{prop}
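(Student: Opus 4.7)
The plan is to treat the two isomorphisms separately, in both cases using nil-invariance of periodic cyclic homology to reduce to $\CC$.

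For $\mathsf{HP}(\Perf R) \cong \mathsf{HP}(\CC)$ the argument is immediate: by Morita invariance one has $\mathsf{HP}(\Perf R) \cong \mathsf{HP}(R)$. Since $\CC$ is algebraically closed and $R$ is a finite dimensional commutative local $\CC$-algebra, the residue field is $\CC$ and $\mathfrak{m} = \rad(R)$ is nilpotent. Goodwillie's theorem on the nil-invariance of $\mathsf{HP}$ for $\QQ$-algebras then yields $\mathsf{HP}(R) \cong \mathsf{HP}(R/\mathfrak{m}) \cong \mathsf{HP}(\CC)$.

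For $\mathsf{HP}(\sfD^{\mathrm{b}}(\modu R)) \cong \mathsf{HP}(\CC)$ I would follow the same pattern as in the proof of Theorem~\ref{T:EDb=EPerf}. Since $R$ is local, the unique simple $\CC = R/\mathfrak{m}$ generates $\sfD^{\mathrm{b}}(\modu R)$ as a thick subcategory via composition series, so there is a derived Morita equivalence $\sfD^{\mathrm{b}}(\modu R) \simeq \Perf(\mcE)$ with $\mcE = \RHom_R(\CC, \CC)$. Consequently $\mathsf{HP}(\sfD^{\mathrm{b}}(\modu R)) \cong \mathsf{HP}(\mcE)$, and the task reduces to proving $\mathsf{HP}(\mcE) \cong \mathsf{HP}(\CC)$. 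The natural tool here is Koszul duality: $R$ and $\mcE$ are dual as augmented dg algebras over $\CC$, and results of Feigin--Tsygan/Keller type comparing Hochschild and cyclic invariants of Koszul dual dg algebras should transfer the Goodwillie computation on the $R$ side to $\mcE$. An equivalent formulation, using the localization triangle $\Perf R \to \sfD^{\mathrm{b}}(\modu R) \to \sfD_{\sg}(R)$, is to show that $\mathsf{HP}(\sfD_{\sg}(R)) = 0$.

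The main obstacle is exactly this identification $\mathsf{HP}(\mcE) \cong \mathsf{HP}(\CC)$. The dg algebra $\mcE$ is typically cohomologically infinite dimensional: already for $R = \CC[x]/(x^2)$ its cohomology is the polynomial algebra $\CC[t]$ with $|t|=1$. The augmentation $\mcE \to \CC$ picking out cohomological degree $0$ has kernel $\mcE_{\geq 1}$ that is neither acyclic nor nilpotent, so neither quasi-isomorphism invariance nor Goodwillie applies directly. Making the Koszul-dual transfer precise in this ungraded, commutative Artinian setting, or equivalently proving the vanishing $\mathsf{HP}(\sfD_{\sg}(R)) = 0$, is the delicate step I expect to require the most care.
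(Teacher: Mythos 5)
For the first isomorphism your argument coincides with the paper's: Morita invariance plus Goodwillie's nil-invariance of $\mathsf{HP}$ for $\QQ$-algebras reduces $\mathsf{HP}(\Perf R)$ to $\mathsf{HP}(R/\mathfrak{m}) \cong \mathsf{HP}(\CC)$.

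For the second isomorphism there is a genuine gap, which you yourself flag. You propose to rerun the argument of Theorem~\ref{T:EDb=EPerf}: pass to $\mcE = \RHom_R(\CC,\CC)$ via the Morita equivalence $\sfD^{\mathrm{b}}(\modu R) \simeq \Perf\mcE$, and then compare $\mathsf{HP}(\mcE)$ with $\mathsf{HP}(\CC)$. But the passage from $\EE(\mcE)$ to $\EE(\CC)$ in that theorem hinges entirely on the semi-simple grading: it is the induced internal $\ZZ$-grading on $\mcE$ that produces the dg subalgebra $\mcE_0 = \RHom_{\gr\Lambda}(S,S)$, allows one to invoke \cite{TVdB}*{Lemma~6.6} to get $\EE(\mcE)\cong\EE(\mcE_0)$, and then reduces to a formality argument because $\mcE_0$ has cohomology concentrated in degree $0$. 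The present proposition is about algebras that may carry no semi-simple grading at all, so the internal grading on $\mcE$ is not available, $\mcE$ need not be formal, and there is no distinguished ``degree $0$'' subalgebra. A vague appeal to Feigin--Tsygan/Keller-type Koszul-duality comparisons does not close this; for general local $R$ the Ext-algebra can have wildly growing cohomology and one cannot simply transport the discrete Goodwillie computation on $R$ across the duality. You correctly identify proving $\mathsf{HP}(\sfD_{\sg}(R))=0$ as an equivalent formulation, but you offer no argument for it.

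The paper avoids the Koszul-dual object entirely. For the bounded derived category it cites \cite{khan2023lattice}*{Theorem~A.2}, which gives $\mathsf{HP}(\sfD^{\mathrm{b}}(\modu R)) \cong \mathsf{HP}(\sfD^{\mathrm{b}}(\modu \CC))$ directly, and the right-hand side is $\mathsf{HP}(\CC)$ since $\CC$ is regular. That external input is precisely the missing idea; without it, or a substitute establishing $\mathsf{HP}(\sfD_{\sg}(R))=0$ by other means, your proof does not close.
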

\begin{proof}
The functor $\mathsf{HP}$ is nilinvariant for discrete rings by \cite{Goodwillie} and so
\[
\mathsf{HP}(\Perf R) \cong \mathsf{HP}(R) \cong \mathsf{HP}(\CC).
\]
On the other hand, by \cite{khan2023lattice}*{Theorem~A.2} we have
\[
\mathsf{HP}(\sfD^{\mathrm{b}}(\modu R)) \cong \mathsf{HP}(\sfD^{\mathrm{b}}(\modu \CC)) \cong \mathsf{HP}(\CC).
\]
\end{proof}

Let us make one further remark on this theme. Recall that a localizing invariant $\EE$ is \emph{truncating} if for every connective dg algebra $R$ the canonical map $\EE(R) \to \EE(\sfH^0(R))$ is an isomorphism. By \cite{LandTamme}*{Corollary~3.5} any truncating invariant is nilinvariant for discrete rings and so if $\Gamma$ is a finite dimensional algebra then
\[
\EE(\Perf \Gamma) \cong \EE(\Gamma) \cong \EE(\Gamma/\rad(\Gamma)) \cong \EE(k)^{\oplus n}.
\]
Thus for such invariants everything boils down to computing $\EE(\sfD^{\mathrm{b}}(\modu \Gamma))$. It might be worthwhile to highlight that both $\mathsf{HP}$ in characteristic $0$ and $\sfK\sfH$ are truncating invariants.



%
%
%



%



\bibliography{motive-bib}

\end{document}